\documentclass[11pt]{amsart}

\usepackage[T1]{fontenc}
\usepackage[utf8]{inputenc}
\usepackage{lmodern}
\usepackage{microtype}
\usepackage{amsmath,amssymb,amsthm,mathtools}
\usepackage[hidelinks]{hyperref}

\newcommand{\C}{\mathbb C}
\newcommand{\PP}{\mathbb P}
\newcommand{\Aff}{\mathbb A}
\newcommand{\gr}{\operatorname{gr}}
\newcommand{\rank}{\operatorname{rank}}
\newcommand{\Spec}{\operatorname{Spec}}
\newcommand{\Proj}{\operatorname{Proj}}
\newcommand{\TC}{\operatorname{TC}}

\newcommand{\QH}{\mathcal Q}

\newtheorem{theorem}{Theorem}[section]
\newtheorem*{maintheorem}{Theorem~\ref{thm:overview}}
\newtheorem{lemma}[theorem]{Lemma}
\newtheorem{proposition}[theorem]{Proposition}
\newtheorem{corollary}[theorem]{Corollary}
\theoremstyle{remark}

\title[Quadratic hulls of osculating varieties]
{Quadratic hulls of osculating varieties of rational normal curves}

\author{C\'esar Massri}
\address{Departamento de Matem\'atica, Facultad de Ciencias Exactas y Naturales, Universidad de Buenos Aires, Argentina}
\email{cmassri@dm.uba.ar}

\subjclass[2020]{Primary 14M10; Secondary 14N05, 14M17}
\keywords{quadratic hulls, osculating varieties, rational normal curves, coincident root loci, transvectants, Hahn polynomials, tangent cones, complete intersections}

\begin{document}

\begin{abstract}
Let $T^p c_r$ be the $p$-osculating variety of the rational normal curve
$c_r\subset\PP S^r(\C^2)$. We study its quadratic hull, the scheme defined
by the quadratic equations vanishing on $T^p c_r$. For $r\ge2p+3$, we prove
that this scheme is a complete intersection of $\binom{r-2p}{2}$ quadrics
whenever
\[
 p\ge\binom{r-2p-1}{2}.
\]
When the inequality is strict, the hull is integral and strictly contains
$T^p c_r$. On the boundary its support is reducible, except for
$(p,r)=(1,5)$, where the hull equals the osculating variety. The proof is local at the highest weight point. Koornwinder's relation
between Clebsch--Gordan coefficients and Hahn polynomials leads to a
description of the tangent-cone equations in which each antidiagonal becomes
an ordinary polynomial space of bounded degree. An incidence estimate then
gives a normal integral tangent cone in the strict range, and the $SL_2$-action
turns this local statement into global integrality.
\end{abstract}

\maketitle

\section{Introduction}

Throughout, we work over $\C$. Let $c_r\subset\PP S^r(\C^2)$ be the
rational normal curve and
\[
 T^p c_r
 =\{[\ell^{\,r-p}g]:0\ne\ell\in\C^2,\ 0\ne g\in S^p(\C^2)\}
\]
its $p$-osculating variety. Thus $T^p c_r$ is the locus of binary forms
with a root of multiplicity at least $r-p$. For a projective variety
$Y\subset\PP V$, write
\[
 \QH(Y)
 :=\Proj\!\left(
 \operatorname{Sym}(V^*)/\langle I(Y)_2\rangle
 \right)
\]
for the closed subscheme defined by the ideal generated by its quadratic
equations, and set
\[
 X_{p,r}=\QH(T^p c_r).
\]
The quadrics are the coefficients of the even transvectants $(f,f)_{2m}$,
$m>p$; equivalently,
\[
 I(T^p c_r)_2\simeq
 \bigoplus_{m=p+1}^{\lfloor r/2\rfloor}(S^{2r-4m}\C^2)^*;
\]
see \cite[Corollary~2.4]{Massri2026}. In the stable range $r>4p$ they
already cut out $T^p c_r$ set-theoretically
\cite[Corollary~2.6]{Massri2026}. Outside that range the common zero scheme
may be larger or reducible; the examples $(p,r)=(2,7)$ and $(3,10)$ show
both phenomena \cite[Proposition~4.1 and Introduction]{Massri2026}.

\begin{theorem}\label{thm:overview}
Let $p,r$ be integers with $p\ge0$ and $r\ge2p+3$, and set
\[
 N=\binom{r-2p-1}{2}.
\]
If $p\ge N$, then $X_{p,r}$ is a complete intersection of
$\binom{r-2p}{2}$ quadrics, with
\[
 \dim X_{p,r}=2p+1-N,
 \qquad
 \deg X_{p,r}=2^{\binom{r-2p}{2}}.
\]
If $p>N$, then $X_{p,r}$ is integral and
$T^p c_r\subsetneq X_{p,r}$. If $p=N$, its support is reducible except
when $(p,r)=(1,5)$; in that case $X_{1,5}=T^1c_5$ as schemes.
\end{theorem}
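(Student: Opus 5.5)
The argument is local at the highest weight point $x_0=[x^r]\in c_r$ and becomes global through the $SL_2$-action. In the quadratic-hull literature the ambient dimension is $r$, while $T^pc_r$ has dimension $p+1$, so $X_{p,r}$ is cut out by $\sum_{m=p+1}^{\lfloor r/2\rfloor}(2r-4m+1)$ quadrics. The first step is to check that this number equals $\binom{r-2p}{2}$ whenever it can be a complete intersection. Equivalently, $r-\binom{r-2p}{2}=2p+1-N$ should be the expected dimension. This identity, $\binom{r-2p}{2}+\binom{r-2p-1}{2}=(r-2p-1)^2$, holds up to the count of quadrics. By Krull's principal ideal theorem, every component of $X_{p,r}$ has dimension at least $2p+1-N$. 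Once we know that this dimension is attained everywhere, $X_{p,r}$ is a complete intersection, and Bézout gives the degree $2^{\binom{r-2p}{2}}$.

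The key reduction uses the $SL_2$-stability of $X_{p,r}$ and the fact that $x_0$ lies in the closure of every $SL_2$-orbit's $B$-orbit: the torus contracts everything toward the weight-$r$ vector. Hence every irreducible component of $X_{p,r}$ passes through the orbit closure containing $x_0$. Upper semicontinuity of fibre dimension and of embedding-dimension data then lets us control $X_{p,r}$ through its tangent cone $\TC_{x_0}X_{p,r}$. Concretely, I take affine coordinates $a_1,\dots,a_r$ around $x_0$ by writing $f=x^r+\sum a_i\binom{r}{i}x^{r-i}y^i$, and I expand each transvectant quadric $(f,f)_{2m}$. The lowest-degree parts are linear in the $a_i$ paired with $x^r$, plus quadratic terms $\sum c_{ij}a_ia_j$. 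Here the $c_{ij}$ are Clebsch–Gordan coefficients. By Koornwinder's identity they are values of Hahn polynomials along antidiagonals $i+j=$ const.

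The linear parts kill the coordinates $a_{1},\dots$ beyond the tangent space of $T^pc_r$, and they eliminate $a_{2m}$-type variables. What remains is a system of leading forms, which are quadrics in the surviving variables. On each antidiagonal $i+j=k$, the space spanned by these forms is identified, through the Hahn-polynomial description, with polynomials in one variable of degree $\le d_k$. From this I aim to show that the initial ideal is generated by a regular sequence of quadrics in about $p$ variables, using an incidence-variety dimension count.

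I consider the incidence $\{(\text{point},\text{quadric})\}$ and bound the dimension of the locus where the Jacobian drops rank by the number of free parameters, namely $p$ versus $N$. When $p\ge N$, this gives $\dim\TC_{x_0}=2p+1-N$, so the tangent cone is a complete intersection. When $p>N$, the singular locus has codimension $\ge2$, so by Serre's criterion the tangent cone is normal and hence integral. This incidence estimate is the main obstacle. I would first try to prove it by showing that the degree-bounded polynomial spaces impose independent conditions, as in Vandermonde or interpolation arguments.

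Integrality of the tangent cone forces $X_{p,r}$ to be integral near $x_0$ with a single branch through it. By $SL_2$-equivariance and the contraction property, this yields global irreducibility and reducedness, using Cohen–Macaulayness of the complete intersection so that there are no embedded components. The strict containment $T^pc_r\subsetneq X_{p,r}$ is then a dimension comparison: $2p+1-N>p+1$ iff $p>N$.

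On the boundary $p=N$ the dimensions coincide, $\dim X_{p,r}=p+1=\dim T^pc_r$. The tangent cone is then a complete intersection of degree $2^{\binom{r-2p}{2}}$. Its comparison with the multiplicity of $T^pc_r$ at $x_0$ shows that there is extra support, except in the single case $(1,5)$. For $(1,5)$ the degrees match ($\deg T^1c_5=2(5-1)=8=2^3$), so by the complete-intersection Cohen–Macaulay property we get $X_{1,5}=T^1c_5$. For the other boundary cases, $(p,r)=(3,10),(6,14),\dots$, the degree strictly exceeds $\deg T^pc_r=2(r-p)\cdot$(appropriate binomial). So another component appears, and this is consistent with the example $(3,10)$ from \cite{Massri2026}.
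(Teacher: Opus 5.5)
Your outline follows the same route as the paper: pivot elimination at $o=[x^r]$, the Hahn-polynomial description of each antidiagonal, an incidence estimate, Serre's criterion, and the Borel fixed point theorem. However, two steps are either not carried out or would fail.

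\textbf{The incidence estimate is missing.} It is the whole content of the local argument, and you only state it as an aim. Interpolation controls only the parameter side: forms whose first non-zero antidiagonal is $W_s$, and whose coefficient polynomial has $b$ roots at active values of $i(L_s-i)$, form a family of dimension $N-1-D_s-b$, where $D_s=\sum_{\ell<s}(\mu_\ell+1)$. You also need a rank bound. That first non-zero antidiagonal is the leading antidiagonal of a triangular block of size $n-s$ in the $n=2p-1$ variables $a_3,\ldots,a_{2p+1}$ (not ``about $p$''), so $\dim\ker M_q\le s+2b$. The two bounds balance only if $D_s\ge s+\mu_s$. This fails at $s=2$, where a separate argument is needed: a non-vanishing $(n-3)$-minor coming from a $W_3$-component with constant coefficient polynomial. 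Note also that the cone cut out by the initial forms of the generators in general only contains the tangent cone. You need the identification $\TC_o(X_{p,r})\simeq\Aff^2\times C$ to pass from integrality of the cone to a domain at $o$, and it holds because these initial forms are a regular sequence (Valabrega--Valla).

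\textbf{The boundary argument is wrong as written.} The tangent cone at $o$ is cut out by $r-2p-1$ linear forms and the $N$ quadrics of $W$, so its degree is $2^N$, not $2^{\binom{r-2p}{2}}$. A comparison at $o$ cannot work in general. The parametrization $(t,g)\mapsto(x+ty)^{r-p}g$ is finite and birational near $o$ and pulls $\mathfrak m_o$ back to $(a_1,\ldots,a_p,t^{p+1})$, with $a_1,\ldots,a_p,t$ a regular system of parameters. Hence $T^pc_r$ has multiplicity $p+1$ at $o$. For $(p,r)=(3,10)$ this gives $4\mid 8$, so nothing local excludes a double structure on $T^3c_{10}$. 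Likewise, ``the degree strictly exceeds $\deg T^pc_r$'' does not produce another component, because $X_{p,r}$ could a priori be a non-reduced scheme supported on $T^pc_r$.

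What is needed is a divisibility argument on global degrees. If the support were irreducible it would equal $T^pc_r$, so $\deg T^pc_r=(p+1)(r-p)$ would divide $2^{\binom{r-2p}{2}}$; note the degree is $(p+1)(r-p)$, not $2(r-p)$ times a binomial. With $p=\binom t2$ and $t=r-2p-1\ge3$ one has $p+1<r-p<2(p+1)$. So the two factors are not both powers of two, and the degree has an odd prime factor. The boundary cases are $(3,10),(6,17),(10,26),\ldots$; $(6,14)$ is not one. Your treatment of $(1,5)$ is fine: $\deg X_{1,5}=8=\deg T^1c_5$ together with unmixedness forces $X_{1,5}=T^1c_5$.
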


The numerical hypothesis is forced by dimension. The number of quadratic
equations is
\[
 M=\dim I(T^p c_r)_2=\binom{r-2p}{2},
\]
so their expected dimension is $r-M$, while $\dim T^p c_r=p+1$; the
difference is
\[
 (r-M)-(p+1)=p-N.
\]
Thus $p\ge N$ is necessary for these quadrics to form a complete
intersection. The proof that it is sufficient is local. Every irreducible
component of the support of $X_{p,r}$ is $SL_2$-stable, hence contains the
highest weight point $o=[x^r]$ by the Borel fixed point theorem. After eliminating the equations with independent linear parts at $o$, the
remaining quadratic initial forms split into antidiagonals. Koornwinder's
identification of the $SU(2)$ Clebsch--Gordan coefficients with Hahn
polynomials leads to a simple description of their coefficient functions:
after the pivot elimination, each antidiagonal becomes a full polynomial
space of bounded degree in the quadratic variable $i(L-i)$. Consequently,
making coefficients vanish costs parameters at the same rate at which the
rank of the corresponding quadratic form can drop. This gives a small singular
locus for the tangent cone, hence a normal integral tangent cone when
$p>N$. The associated graded local ring then gives a domain at $o$, and
the $SL_2$-action propagates this single local fact to the whole quadratic
hull. On the boundary, the same local analysis gives a complete
intersection, while a degree comparison with $T^p c_r$ forces
reducibility. The cases $p<N$ are not addressed by this argument.

As a by-product, Corollary~\ref{cor:individual-Mm} gives a range for the
individual summands considered in \cite{Massri2013}: the zero locus of
$S^{2r-4m}\C^2$ is a complete intersection if
$m-1\ge\binom{r-2m+1}{2}$, and is integral if the inequality is strict.

\section{Local equations at the highest weight point}\label{sec:local}

For fixed integers $p,r$ with $0\le p<r$, set
\[
 X_{p,r}=\QH(T^p c_r)\subset \PP S^r(\C^2).
\]
We regard $X_{p,r}$ as the scheme defined by the quadratic equations
$I(T^p c_r)_2$.

We use the same coordinates as in \cite{Massri2013}. Let
\[
 x_i=\binom ri x^{r-i}y^i,\qquad 0\le i\le r,
\]
so that a general binary form is written as
\[
 f=\sum_{i=0}^r a_i x_i.
\]
For every $0\le m\le\lfloor r/2\rfloor$, let
\[
 \pi_m:S^2(S^r\C^2)\longrightarrow S^{2r-4m}\C^2
\]
be a non-zero $SL_2$-equivariant projection. Choose the standard weight
basis $w_0^{(m)},\ldots,w_{2r-4m}^{(m)}$ and write
\begin{equation}\label{eq:q-expansion}
 \pi_m(f\cdot f)
 =\sum_{j=0}^{2r-4m}q_j^{(m)}(f)\,w_j^{(m)}.
\end{equation}
Since the summand $S^{2r-4m}\C^2$ occurs with multiplicity one, we may
normalize $\pi_m$ so that \eqref{eq:q-expansion} is the classical
$2m$-th transvectant $(f,f)_{2m}$. Weight considerations give
\begin{equation}\label{eq:qcoeff}
 q_j^{(m)}(f)
 =\sum_{i+i'=2m+j}
 q_j^{(m)}(x_i\cdot x_{i'})\,a_i a_{i'}.
\end{equation}
Thus each equation is supported on a single antidiagonal. With the present
normalization, for $i+i'=2m+j$ one has
\begin{equation}\label{eq:qexplicit}
 q_j^{(m)}(x_i\cdot x_{i'})
 =(-1)^i\sum_{\ell=0}^{j}(-1)^\ell
 \binom{r-2m}{\ell}
 \binom{r-2m}{j-\ell}
 \binom{2m}{i-\ell},
\end{equation}
where a binomial coefficient is zero if its lower index is outside the
usual range; compare \cite{Massri2013}.

Hahn polynomials are a classical and extensively studied family of discrete
orthogonal polynomials. Koornwinder's identification of the Clebsch--Gordan
coefficients for $SU(2)$ with Hahn polynomials \cite[\S4]{Koornwinder1981}
led us to recognize the following form of the coefficients
$q_j^{(m)}$. We use the normalization
\[
 Q_n(z;\alpha,\beta,N)
 =\sum_{k=0}^n
 \frac{(-n)_k\,(n+\alpha+\beta+1)_k\,(-z)_k}
 {(-N)_k\,(\alpha+1)_k\,k!},
\]
where $(u)_k$ denotes the Pochhammer symbol.

\begin{lemma}\label{lem:qpolynomial}
Let $L=2m+j\le r$ and put $a=r-L$. Then
\begin{equation}\label{eq:hahn}
 q_j^{(m)}(x_i\cdot x_{L-i})
 =(-1)^i\binom Li\binom{r-2m}{j}
 Q_j(i;a,a,L),
 \qquad 0\le i\le L.
\end{equation}
In particular, $Q_j(i;a,a,L)$ has degree $j$, satisfies
$Q_j(0;a,a,L)=1$, and
\begin{equation}\label{eq:hahn-symmetry}
 Q_j(L-i;a,a,L)=(-1)^jQ_j(i;a,a,L).
\end{equation}
If $L>r$, set
\[
 j'=2r-4m-j,
 \qquad
 L'=2m+j'=2r-L.
\]
Then
\begin{equation}\label{eq:reflection}
 q_j^{(m)}(x_i\cdot x_{L-i})
 =q_{j'}^{(m)}(x_{r-i}\cdot x_{L'-(r-i)}),
\end{equation}
and the first part applies to the right-hand side.
\end{lemma}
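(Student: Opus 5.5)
The plan is to prove \eqref{eq:hahn} by comparing generating functions in the variable $i$. Both sides are then reduced to a single polynomial identity of degree $j$, which is a classical ${}_2F_1$ transformation. Throughout put $n=r-2m$, so that $n=a+j$ and $2m=L-j$.

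\emph{Step 1: the left side.} Multiply \eqref{eq:qexplicit} by $(-1)^i t^i$ and sum over $i$. Since $\sum_i\binom{2m}{i-\ell}t^{i}=t^\ell(1+t)^{2m}$, this gives
\[
 \sum_i (-1)^i q_j^{(m)}(x_i\cdot x_{L-i})\,t^i
 =(1+t)^{2m}\,A_j(t),
 \qquad
 A_j(t)=\sum_{\ell}(-1)^\ell\binom n\ell\binom n{j-\ell}t^\ell .
\]
Here $A_j(t)$ is the coefficient of $s^j$ in $(1-ts)^n(1+s)^n$.

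\emph{Step 2: the right side.} Expand $Q_j(i;a,a,L)$ term by term. I would use the identities
\[
 \binom Li\frac{(-i)_k}{k!}=(-1)^k\binom Lk\binom{L-k}{i-k}
 \quad\text{and}\quad
 \frac{(-1)^k\binom Lk}{(-L)_k}=\frac1{k!},
\]
the second following from $(-L)_k=(-1)^kL!/(L-k)!$. Summing over $i$ then gives
\[
 \sum_i\binom Li Q_j(i;a,a,L)\,t^i
 =\sum_{k=0}^j\frac{(-j)_k(j+2a+1)_k}{(a+1)_k\,k!}\,t^k(1+t)^{L-k}.
\]
Factor out $(1+t)^{2m}$. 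The claim then reduces to the polynomial identity
\[
 A_j(t)=\binom nj\sum_{k=0}^j\frac{(-j)_k(j+2a+1)_k}{(a+1)_k\,k!}\,t^k(1+t)^{j-k}.
\]

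\emph{Step 3: the main obstacle.} This identity is the only real content of the lemma. Dividing by $(1+t)^j$ and setting $u=t/(1+t)$, the right side becomes $\binom nj\,{}_2F_1(-j,j+2a+1;a+1;u)$. On the left, $A_j(t)=\binom nj\,{}_2F_1(-j,-n;n-j+1;-t)$, after writing $\binom n{j-\ell}/\binom nj$ in Pochhammer form and using $n-j+1=a+1$. The Pfaff transformation
\[
 {}_2F_1(\alpha,\beta;\gamma;z)=(1-z)^{-\alpha}\,{}_2F_1\bigl(\alpha,\gamma-\beta;\gamma;z/(z-1)\bigr),
\]
applied with $\alpha=-j$, $\beta=-n$, $\gamma=a+1$ and $z=-t$, gives
\[
 (1+t)^{j}\,{}_2F_1(-j,\,a+1+n;\,a+1;\,u).
\]
Since $a+1+n=j+2a+1$, this is exactly the required form. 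The point where care is needed is tracking the signs and the terminating nature of the series. Alternatively, one could match the coefficients of $s^j$ in $(1-ts)^n(1+s)^n$ directly.

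\emph{Step 4: the remaining claims.} The value $Q_j(0;a,a,L)=1$ is immediate, since $(-0)_k=0$ for $k>0$. The degree is $j$: the $k$-th term is a polynomial of degree $k$ in $i$, and the top coefficient is nonzero because $j\le L$ and $a\ge0$.

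The symmetry \eqref{eq:hahn-symmetry} follows from \eqref{eq:qexplicit}. Substituting $\ell\mapsto j-\ell$ in \eqref{eq:qexplicit} and using $\binom{2m}{i-\ell}=\binom{2m}{2m-i+\ell}$ shows that $q_j^{(m)}(x_{L-i}\cdot x_i)$ equals $(-1)^{L-i}(-1)^j(-1)^i$ times the same sum that gives $(-1)^iq_j^{(m)}(x_i\cdot x_{L-i})$. Comparing this with \eqref{eq:hahn} and the invariance of $\binom Li$ under $i\mapsto L-i$ yields \eqref{eq:hahn-symmetry}, because $(-1)^{L}=(-1)^j$.

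The reflection \eqref{eq:reflection} is also a direct check on \eqref{eq:qexplicit}. Replace $i$ by $r-i$ and $j$ by $j'$, note that $n$ is unchanged, and reindex $\ell$. Conceptually, this is the $SL_2$ element exchanging $x$ and $y$. It maps $x_i$ to $x_{r-i}$ and $w_j^{(m)}$ to $\pm w_{j'}^{(m)}$, and the explicit formula fixes the sign to be $+1$. Finally, when $L>r$ one has $L'=2r-L<r$, so the first part applies to the right-hand side.
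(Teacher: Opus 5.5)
Your proof is correct, but for the core identity \eqref{eq:hahn} it takes a different route from the paper.

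The paper's main justification is representation-theoretic. It quotes Koornwinder's description of the $SU(2)$ Clebsch--Gordan coefficients by Hahn polynomials, converts from his orthonormal basis to the binomial basis $x_i$ (which produces the factor $(-1)^i\binom Li$), and fixes the scalar by $q_j^{(m)}(x_0\cdot x_L)=\binom{r-2m}{j}$. As an alternative, it only asserts that \eqref{eq:qexplicit} can be rewritten as the Hahn ${}_3F_2$.

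You instead take generating functions in $i$. This splits off the common factor $(1+t)^{2m}$ and reduces everything to a degree-$j$ polynomial identity, which is Pfaff's transformation of a terminating ${}_2F_1$. That transformation is legitimate here because $\alpha=-j$ and $\gamma=a+1\ge1$. The two Pochhammer identities, the formula $A_j(t)=\binom nj\,{}_2F_1(-j,-n;a+1;-t)$, and the parameter match $a+1+n=j+2a+1$ all check out.

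The two approaches buy different things. The paper's route explains why Hahn polynomials appear and makes their symmetry conceptually natural, but it relies on external normalization conventions. Yours is self-contained and supplies the computation behind the paper's ``equivalently''. That computation is not a mere rewriting: read as a hypergeometric sum in $\ell$, \eqref{eq:qexplicit} equals $(-1)^i\binom nj\binom{2m}{i}\,{}_3F_2(-n,-j,-i;a+1,2m-i+1;1)$. This is a different ${}_3F_2$ from the Hahn one, and it degenerates for $i>2m$. So some transformation is unavoidable, and your generating function trades it for a classical ${}_2F_1$ transformation.

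Your arguments for \eqref{eq:hahn-symmetry} and \eqref{eq:reflection} coincide with the paper's. For the symmetry you use $x_i\cdot x_{L-i}=x_{L-i}\cdot x_i$, which gives the identity at the $L+1>j$ points $0\le i\le L$ and hence as polynomials. For the reflection you substitute $\ell\mapsto n-\ell$.

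One cosmetic point: the sign bookkeeping in your symmetry sentence is garbled. If $S(i)$ denotes the sum in \eqref{eq:qexplicit}, then $q_j^{(m)}(x_{L-i}\cdot x_i)=(-1)^{L-i+j}S(i)=(-1)^iS(i)=q_j^{(m)}(x_i\cdot x_{L-i})$. That is all you need, and it is automatic anyway.
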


\begin{proof}
The maps $\pi_m$ are the multiplicity-one Clebsch--Gordan projections for
$S^r\C^2\otimes S^r\C^2$. Koornwinder's formula expresses their matrix
coefficients as Hahn polynomials. Translating from his orthonormal basis to
the binomial basis $x_i$ used here gives the factor
$(-1)^i\binom Li$ in \eqref{eq:hahn}; our normalization is then fixed by
\[
 q_j^{(m)}(x_0\cdot x_L)=\binom{r-2m}{j}.
\]
Equivalently, \eqref{eq:hahn} follows directly by rewriting the finite sum
\eqref{eq:qexplicit} as the terminating ${}_3F_2$ defining the Hahn
polynomial above. Formula \eqref{eq:hahn-symmetry} is the usual symmetry of
the equal-parameter Hahn polynomials; it also follows from the symmetry of
$x_i\cdot x_{L-i}$ in \eqref{eq:hahn}. Finally, the substitutions
$i\mapsto r-i$ and $\ell\mapsto r-2m-\ell$ in \eqref{eq:qexplicit} give
\eqref{eq:reflection}.
\end{proof}

By \cite[Corollary~2.4]{Massri2026}, the quadratic equations defining
$X_{p,r}$ are
\begin{equation}\label{eq:allq}
 q_j^{(m)}=0,
 \qquad
 p+1\le m\le\left\lfloor\frac r2\right\rfloor,
 \qquad
 0\le j\le 2r-4m.
\end{equation}
Their number is
\[
 \sum_{m=p+1}^{\lfloor r/2\rfloor}(2r-4m+1)
 =\binom{r-2p}{2}.
\]

We study these equations at $o=[x^r]$. Work in the chart $a_0=1$ and put
\[
 \mathfrak m=(a_1,\ldots,a_r).
\]
By \eqref{eq:qcoeff}, the only possible linear term in $q_j^{(m)}$ is
$a_0a_{2m+j}$. Thus
\begin{equation}\label{eq:qmodm2}
 q_j^{(m)}\equiv
 \begin{cases}
 2q_j^{(m)}(x_0\cdot x_{2m+j})\,a_{2m+j}
 & \pmod{\mathfrak m^2},\quad 2m+j\le r,\\[3pt]
 0 & \pmod{\mathfrak m^2},\quad 2m+j>r.
 \end{cases}
\end{equation}
Formula \eqref{eq:qexplicit} gives
\[
 q_j^{(m)}(x_0\cdot x_{2m+j})=\binom{r-2m}{j}\ne0
 \qquad (2m+j\le r).
\]

For $m=p+1$ and $0\le s\le r-2p-2$,
\begin{equation}\label{eq:pivotmodm2}
 q_s^{(p+1)}\equiv
 2\binom{r-2p-2}{s}a_{2p+2+s}
 \pmod{\mathfrak m^2}.
\end{equation}
These equations have independent initial linear forms
$a_{2p+2},\ldots,a_r$ and will be used as pivots.

Let $q_j^{(m)}$ be any other equation. If $2m+j\le r$, put
$s=2m+j-2p-2$ and define
\begin{equation}\label{eq:qhat}
 \widehat q_j^{(m)}
 :=q_j^{(m)}-
 \frac{\binom{r-2m}{j}}
 {\binom{r-2p-2}{s}}\,q_s^{(p+1)}.
\end{equation}
Its linear part vanishes. If $2m+j>r$, set simply
$\widehat q_j^{(m)}=q_j^{(m)}$.
Modulo the linear initial forms we may set
$a_{2p+2}=\cdots=a_r=0$; by abuse of notation we regard the resulting
quadratic initial forms $\widehat q_j^{(m)}$ as elements of
$\C[a_1,\ldots,a_{2p+1}]_2$. Every monomial left has index sum at least
$2p+4$, so both indices are at least $3$. Hence these forms involve only
$a_3,\ldots,a_{2p+1}$. There are
\[
 N=\binom{r-2p-1}{2}
\]
of them. We write
\[
 W=\operatorname{span}\{\widehat q_j^{(m)}\}
 \subset \C[a_3,\ldots,a_{2p+1}]_2.
\]

\section{The initial quadratic forms}\label{sec:W}

For the remainder of Sections~\ref{sec:W}--\ref{sec:normal}, assume
\[
 r\ge2p+3,
 \qquad
 p\ge\binom{r-2p-1}{2}.
\]
The point of the space $W$ is that the original transvectants have now
collapsed to a simpler object. By Lemma~\ref{lem:qpolynomial}, the
coefficient functions on a fixed antidiagonal are Hahn polynomials with the
same parameters and alternating degrees. The pivot subtraction has a simple
effect on this family, and each antidiagonal becomes an ordinary polynomial
space in the quadratic variable $i(L-i)$.

For $0\le s\le 2r-4p-6$, set
\[
 L_s=2p+4+s
\]
and let $W_s\subset W$ be the span of the forms $\widehat q_j^{(m)}$ with
$2m+j=L_s$. Different $W_s$ have disjoint monomial support, so
\[
 W=\bigoplus_s W_s.
\]
Set
\[
 \mu_s=
 \left\lfloor
 \frac{\min(s,\,2r-4p-6-s)}{2}
 \right\rfloor.
\]
Up to the middle antidiagonal, the sequence is
$0,0,1,1,2,2,\ldots$, and then it is read backwards.

\begin{lemma}\label{lem:Wstructure}
For every $s$ there is a linear isomorphism
\[
 \Phi_s:W_s\longrightarrow \C[z]_{\le\mu_s}
\]
such that, if $P=\Phi_s(q)$, then for
$3+s\le i\le2p+1$,
\begin{equation}\label{eq:coeffzero}
 q(x_i\cdot x_{L_s-i})=0
 \quad\Longleftrightarrow\quad
 P\bigl(i(L_s-i)\bigr)=0.
\end{equation}
Consequently,
\[
 \dim W_s=\mu_s+1,
 \qquad
 \dim W=\binom{r-2p-1}{2}=N.
\]
Moreover, prescribing the values of $P$ at
$b\le\mu_s+1$ distinct points imposes $b$ independent linear conditions.
\end{lemma}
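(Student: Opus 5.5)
The plan is to compute every form in $W_s$ explicitly from Lemma~\ref{lem:qpolynomial}, and to exploit the parity built into the equal-parameter Hahn polynomials. Fix $s$ and put $L=L_s=2p+4+s$. After setting $a_{2p+2}=\cdots=a_r=0$, the monomials $a_ia_{L-i}$ that survive are those with $3+s\le i\le 2p+1$. For all of them, $i\ge3$ and $L-i\ge3$, so $z:=i(L-i)\ne0$.

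Every $j$ occurring on this antidiagonal satisfies $j=L-2m\equiv L\equiv s\pmod 2$. By \eqref{eq:hahn-symmetry}, $Q_j(i;a,a,L)$ is symmetric or antisymmetric under $i\mapsto L-i$. Hence it has one of two forms:
\begin{itemize}
\item for $j$ even, $Q_j=R_j(z)$ with $\deg R_j=j/2$;
\item for $j$ odd, $Q_j=(L-2i)R_j(z)$ with $\deg R_j=(j-1)/2$.
\end{itemize}
The normalization $Q_j(0)=1$ gives $R_j(0)=1$, respectively $R_j(0)=1/L$. The exact degree follows from the leading coefficient of the ${}_3F_2$, which is nonzero for $j\le L$.

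Case $L\le r$. Here a pivot exists, namely the equation with $m=p+1$ and index $j_0=L-2p-2=s+2$. Using \eqref{eq:hahn} and \eqref{eq:qhat}, the coefficient of $\widehat q_j^{(m)}$ on $x_i\cdot x_{L-i}$ is
\[
(-1)^i\binom Li\binom{r-2m}{j}\bigl(Q_j-Q_{s+2}\bigr)(i).
\]
Since the $R$'s agree at $z=0$, the difference equals $z\cdot(\text{polynomial in }z)$ when $s$ is even, and $(L-2i)\,z\cdot(\text{polynomial in }z)$ when $s$ is odd. When $s$ is odd, $L$ is odd, so $L-2i\ne0$. I would therefore define
\[
\Phi_s(\widehat q_j^{(m)}):=\frac{R_j-R_{s+2}}{z},
\]
extended linearly. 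The prefactors $\pm\binom Li$ and $(L-2i)z$ are nonzero on the relevant range, so \eqref{eq:coeffzero} holds.

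For surjectivity, the admissible $j$ are $j<s+2$ with $j\equiv s\pmod2$, cut off by $j\le 2r-4m$. Their $R_j$ have exactly the degrees $0,1,\dots$ in $z$, so the differences $R_j-R_{s+2}$ span all polynomials of degree $\le\lfloor s/2\rfloor+1$ that vanish at $0$. Dividing by $z$ gives $\C[z]_{\le\mu_s}$. The main bookkeeping is to check that the constraint $j\le2r-4m$ and the index ranges yield exactly $\mu_s+1$ forms.

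Case $L>r$. There are no pivots. I would use \eqref{eq:reflection} to move to $L'=2r-L$ and $i'=r-i$. A short computation gives $i'(L'-i')=z-r(L-r)$, an affine change of the variable $z$. Also $L'-2i'=2i-L$ is nonzero when $L$ is odd. So $Q_{j'}$ becomes a polynomial in $z$ of degree $\lfloor j'/2\rfloor$, up to the nonvanishing factor, and $\Phi_s$ is again defined by dividing out that factor. The admissible $j'$ have the parity of $L'$ and run over a range whose length gives $\mu_s+1$; this matches the backwards-read sequence $\mu_s=\lfloor(2r-4p-6-s)/2\rfloor$. Injectivity holds in both cases because the images have distinct degrees. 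This yields $\dim W_s=\mu_s+1$, and summing over $s$ gives $\dim W=\sum_s(\mu_s+1)=N$, which matches the number of generators.

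Finally, evaluating $P\in\C[z]_{\le\mu_s}$ at $b\le\mu_s+1$ distinct points is given by a Vandermonde matrix of full rank. So prescribing these values imposes $b$ independent conditions.

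The step I expect to be the main obstacle is the counting near the middle antidiagonal and the transition at $L=r$. There the bound $j\le2r-4m$ starts to cut off the range, and one must verify that the degrees obtained still form a full interval $0,\dots,\mu_s$. I would first try to settle this by writing the range of $j$ as a function of $s$ explicitly in each case.
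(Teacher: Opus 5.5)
Your proposal follows the paper's proof. It uses the parity of the equal-parameter Hahn polynomials on each antidiagonal. It uses the fact that all $Q_j$ equal $1$ at $i=0$, so each pivot difference $Q_j-Q_{s+2}$ is divisible by $i(L-i)$, times $L-2i$ when $s$ is odd. And for $L>r$ it reflects via \eqref{eq:reflection} with the affine shift $z\mapsto z-r(L-r)$.

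The one incorrect step is your justification of injectivity. For $L\le r$ the images $(R_j-R_{s+2})/z$ do not have distinct degrees: they all have degree $\lfloor s/2\rfloor$, inherited from $R_{s+2}$. Their independence follows instead from your own spanning argument. The $\lfloor s/2\rfloor+1$ differences span the kernel of evaluation at $0$ in $\C[z]_{\le\lfloor s/2\rfloor+1}$, which has exactly that dimension, so they form a basis of it.

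There is also a check that neither you nor the paper writes out. $W_s$ consists of forms, which record only the values at the active points. So identifying $W_s$ with $\C[z]_{\le\mu_s}$ also needs at least $\mu_s+1$ distinct active values of $z$. There are $\lceil(2p-1-s)/2\rceil$ of them, and this suffices because $p\ge N\ge r-2p-2$.

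The bookkeeping you single out as the main obstacle is routine, and nothing is cut off at $L=r$.
If $L\le r$, the bound $j\le 2r-4m$ is automatic because $2m\le L\le r$. Also $s\le r-2p-4$ forces $\mu_s=\lfloor s/2\rfloor$.
If $L>r$, the bound $j\le 2r-4m$ is exactly $j'\ge 0$, and the condition $m\ge p+1$ is exactly $j'\le t=2r-4p-6-s$. Also $s\ge r-2p-3$ forces $\mu_s=\lfloor t/2\rfloor$.
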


\begin{proof}
Write $L=L_s$. Suppose first that $L\le r$, and put $a=r-L$. The pivot on
this antidiagonal is $q_{s+2}^{(p+1)}$. The remaining equations have Hahn
degrees
\[
 j=s,s-2,\ldots,\varepsilon,
 \qquad \varepsilon\equiv s\pmod2,
 \quad \varepsilon\in\{0,1\}.
\]
Divide the coefficient values by $(-1)^i\binom Li$. By
Lemma~\ref{lem:qpolynomial} and the choice of the scalar in
\eqref{eq:qhat}, the forms spanning $W_s$ then correspond, up to rescaling
the individual generators, to
\[
 Q_j(i;a,a,L)-Q_{s+2}(i;a,a,L),
 \qquad j=s,s-2,\ldots,\varepsilon.
\]
The Hahn polynomials
\[
 Q_\varepsilon,Q_{\varepsilon+2},\ldots,Q_{s+2}
\]
have distinct degrees and the same symmetry under $i\mapsto L-i$, hence
form a basis of the polynomials of degree at most $s+2$ with that symmetry.
Since all of them take the value $1$ at $i=0$, the displayed differences
form a basis of the kernel of evaluation at $0$ in this space.

If $s$ is even, every polynomial in this kernel is symmetric, vanishes at
$i=0$ and $i=L$, and therefore has a unique expression
\[
 i(L-i)P\bigl(i(L-i)\bigr),
 \qquad \deg P\le s/2=\mu_s.
\]
If $s$ is odd, it is antisymmetric and has the unique form
\[
 i(L-i)(L-2i)P\bigl(i(L-i)\bigr),
 \qquad \deg P\le (s-1)/2=\mu_s.
\]
On the active indices $3+s\le i\le2p+1$, the factors $i(L-i)$ do not
vanish; in the odd case $L$ is odd, so $L-2i$ does not vanish either.
This proves \eqref{eq:coeffzero} and gives the required isomorphism when
$L\le r$.

Suppose now that $L>r$. Put
\[
 L'=2r-L,
 \qquad
 t=2r-4p-6-s,
 \qquad
 k=r-i.
\]
By \eqref{eq:reflection}, after reflection the coefficient functions are the
equal-parameter Hahn polynomials of degrees
\[
 t,t-2,\ldots,\varepsilon',
 \qquad \varepsilon'\equiv t\pmod2.
\]
They therefore form the full space of polynomials of degree at most $t$
with the corresponding symmetry under $k\mapsto L'-k$. If $t$ is even,
this space consists of the polynomials in
$u=k(L'-k)$ of degree at most $t/2$; if $t$ is odd, it consists of
$(L'-2k)$ times such polynomials of degree at most $(t-1)/2$. On the active
indices the latter factor never vanishes. Since
\[
 k(L'-k)=i(L-i)+r(r-L),
\]
translation of the variable identifies this space with
$\C[z]_{\le\mu_s}$, where $z=i(L-i)$. This again gives
\eqref{eq:coeffzero}.

Thus $\dim W_s=\mu_s+1$ for every $s$. Summing over the antidiagonals gives
\[
 \dim W=\sum_s(\mu_s+1)=\binom{r-2p-1}{2}=N.
\]
The last assertion is ordinary polynomial interpolation.
\end{proof}

\section{The incidence estimate}\label{sec:rank}

Let
\[
 N=\dim W=\binom{r-2p-1}{2},
 \qquad
 n=2p-1,
\]
and choose a basis $q_1,\ldots,q_N$ of $W$. Put
\[
 \Sigma=
 \{a\in\Aff^n:\nabla q_1(a),\ldots,\nabla q_N(a)
 \text{ are linearly dependent}\}.
\]
For a quadratic form $q$, let $M_q$ be its symmetric matrix. Since
$\nabla q(a)=2M_q a$, define
\[
 \mathcal I=
 \{([q],a)\in\PP W\times\Aff^n:M_q a=0\}.
\]
Its projection to $\Aff^n$ is $\Sigma$. The estimate below is the only
rank calculation needed later.

\begin{lemma}\label{lem:incidence}
With the notation above,
\[
 \dim\mathcal I\le N-1.
\]
Consequently, $\dim\Sigma\le N-1$.
\end{lemma}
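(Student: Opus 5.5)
The plan is to bound $\dim\mathcal I$ through the projection $\mathcal I\to\PP W$: stratify $\PP W$ by a few discrete invariants of $q$, and on each stratum bound the fibre dimension $\dim\ker M_q$. The estimate $\dim\Sigma\le N-1$ then follows at once, since $\Sigma$ is the image of $\mathcal I$ (the fibre of $\mathcal I\to\Aff^n$ over $a=0$ is all of $\PP W$, but $\{0\}$ is a point, so this is harmless). I would use the antidiagonal decomposition $W=\bigoplus_s W_s$ and Lemma~\ref{lem:Wstructure} throughout. Index rows and columns of $M_q$ by $3,\ldots,2p+1$. The entry $(i,k)$ of $M_q$ is nonzero only if $i+k=L_s\ge 2p+4$, and on $L_s$ it vanishes exactly when $P_s\bigl(i(L_s-i)\bigr)=0$, where $P_s=\Phi_s(q_s)$. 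So $M_q$ is a Hankel-type matrix that vanishes strictly above the antidiagonal $i+k=2p+4$.

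First step. For $q\ne0$ let $s_0$ be the smallest $s$ with $q_s\ne0$, and let $b$ be the number of distinct roots of $P_{s_0}$ among the active values $z=i(L_{s_0}-i)$, $3+s_0\le i\le 2p+1$. We have $b\le\mu_{s_0}$ because $P_{s_0}\ne0$ has degree at most $\mu_{s_0}$. By the interpolation clause of Lemma~\ref{lem:Wstructure}, the stratum with invariants $(s_0,b)$, and with the root positions fixed, has dimension at most $\sum_{s\ge s_0}(\mu_s+1)-1-b$. The root positions range over a finite set, so this bound holds for the whole stratum.

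Second step. Bound the corank of $M_q$ on this stratum. Relative to the antidiagonal $L_{s_0}$, the matrix is anti-triangular. The rows $i\ge 3+s_0$ have their leading entry on $L_{s_0}$, and that entry is nonzero unless $i(L_{s_0}-i)$ is a root. Each root removes at most two leading entries, namely at $i$ and $L_{s_0}-i$. The first $s_0$ rows contribute at most $s_0$ to the corank. This gives $\dim\ker M_q\le s_0+2b$.

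Third step. Combining, the $(s_0,b)$-stratum contributes at most $\sum_{s\ge s_0}(\mu_s+1)-1-b+s_0+2b$ to $\dim\mathcal I$. We need this to be at most $N-1=\sum_s(\mu_s+1)-1$, which amounts to
\[
 s_0+b\le\sum_{s<s_0}(\mu_s+1).
\]
For $s_0=0,1$ this holds, since $\mu_0=\mu_1=0$. Here the hypotheses $r\ge 2p+3$ and $p\ge N$ enter: they keep every relevant antidiagonal in the active range $3\le i\le 2p+1$, and they make the sequence $\mu_s$ the symmetric profile $0,0,1,1,\ldots$ used above.

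Main obstacle. The crude corank bound in the second step is too weak: at $s_0=2$, $b=1$ the inequality already fails. So the real work is a sharper kernel estimate. I would try to prove that a vector in $\ker M_q$ is determined by its coordinates at the "defective" rows. That is, solve $M_qa=0$ by back-substitution along antidiagonals, with the leading entries on $L_{s_0}$ serving as pivots. Then I would show that each root of $P_{s_0}$, and each of the first $s_0$ free coordinates, either increases the kernel dimension by at most one, or forces an additional independent linear condition on the higher components $q_s$, $s>s_0$. The fact that each root costs one condition in $W_{s_0}$ but could kill two pivots is handled by the symmetry $i\leftrightarrow L_{s_0}-i$: the two rows are related, so the defect should be one, not two.

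If this direct approach stalls, the fallback is to dualize. Fix $a$, stratify $\Aff^n$ by the largest and smallest indices in the support of $a$, and count the independent linear conditions that $M_qa=0$ imposes on $q$. Each such condition is a coefficient-vanishing condition on a single antidiagonal, which is exactly what Lemma~\ref{lem:Wstructure} controls.
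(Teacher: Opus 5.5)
Your setup is the paper's own. You stratify by the first nonzero antidiagonal $s_0$ and the number $b$ of active roots of $P_{s_0}$, charge $D_{s_0}+b$ parameters with $D_{s_0}=\sum_{\ell<s_0}(\mu_\ell+1)$, bound $\operatorname{corank}M_q\le s_0+2b$ by the anti-triangular structure, and reduce to $s_0+b\le D_{s_0}$. But the proposal stops exactly where the work begins, and there are two gaps.

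First, you never check this inequality for $s_0\ge3$, and your ``already fails'' suggests you expect a sharper kernel estimate to be needed throughout. It is not. Put $E_s=D_s-s-\mu_s$. Then $E_{s+1}=E_s+2\mu_s-\mu_{s+1}$, with $E_0=E_1=0$ and $E_2=-\mu_2$. Next, $E_3=\mu_2-\mu_3\ge0$ because $\mu_3\le\mu_2$, and for $s\ge3$ the profile gives $2\mu_s-\mu_{s+1}\ge0$. Since $b\le\mu_{s_0}$, the crude bound settles every $s_0\ne2$, and also $s_0=2$ with $b=0$.

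Second, the one failing case $s_0=2$, $b=1$ is left open, and the mechanism you suggest for it is false. The symmetry $i\leftrightarrow L_2-i$ does not reduce the defect to one. Take $q$ purely in $W_2$ with $P_2$ having a non-central active root. Then $M_q$ is zero in rows and columns $3,4$ and is an antidiagonal block of size $n-2$ with two zero entries, so $\operatorname{corank}M_q=4$ exactly.

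The missing idea is a genericity argument that uses the \emph{next} antidiagonal and trades corank against parameters. Split $b=1$ by whether the root is central. If it is, only one entry vanishes, so $\operatorname{corank}M_q\le3$ and $(N-4)+3=N-1$.

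For a non-central root at a fixed position, the stratum is an open subset of a linear subspace of $\PP W$ of dimension $N-4$, hence irreducible. Choose a point of it as follows: the $W_2$-component in the stratum, the $W_3$-component $q_3$ with $\Phi_3(q_3)=1$ (so no entry of the $L_3$ antidiagonal vanishes), and all higher components zero. Reverse the column order of the block indexed by $5,\ldots,2p+1$. The two antidiagonals then form a lower bidiagonal matrix with two zero diagonal entries and nonzero subdiagonal, whose kernel is one-dimensional by back-substitution. Hence some $(n-3)\times(n-3)$ minor $\Delta$ of $M_q$ is not identically zero on the stratum.

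Where $\Delta\ne0$ the corank is at most $3$, contributing $(N-4)+3$. On $\Delta=0$ the parameter dimension drops to at most $N-5$, and the crude corank bound $4$ gives $(N-5)+4$. So your dichotomy ``either the defect is one or an extra condition is imposed'' is right in spirit. But it comes from the higher components and holds only generically, not from the symmetry of the two zeros, and it is needed only at $s_0=2$.
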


\begin{proof}
For each $s$, put
\[
 D_s=\sum_{\ell<s}(\mu_\ell+1).
\]
Let $q\ne0$ and let $s$ be the first index for which its $W_s$-component
is non-zero. The vanishing of the preceding components costs $D_s$
parameters, so such $[q]$ form a subset of $\PP W$ of dimension
$N-D_s-1$.

Let $P$ be the coefficient polynomial of the first non-zero
antidiagonal. Suppose it has $b$ distinct roots among the active values
$i(L_s-i)$. By Lemma~\ref{lem:Wstructure}, these are $b$ independent
conditions, so the corresponding forms have dimension at most
\[
 N-D_s-1-b,
\]
and $b\le \mu_s$.

The first non-zero antidiagonal occupies the square block indexed by
$3+s,\ldots,2p+1$, of size $n-s$. Since $s$ is minimal, all earlier
antidiagonals vanish, while the later ones lie strictly on one side of this
antidiagonal. Thus the $W_s$-antidiagonal is the leading antidiagonal of a
triangular minor, and its determinant is, up to sign, the product of its
entries. If $e$ entries on this antidiagonal vanish, omit, for each of them, its row
and the column paired with it by the antidiagonal. The remaining
$(n-s-e)\times(n-s-e)$ minor has a non-zero leading antidiagonal and hence
non-zero determinant.
Hence
\[
 \rank M_q\ge n-s-e.
\]
Each root of $P$ accounts for at most a symmetric pair of zero
coefficients, so $e\le2b$. Therefore
\[
 \dim\ker M_q\le s+2b.
\]
The corresponding part of $\mathcal I$ has dimension at most
\begin{equation}\label{eq:incidence-count}
 (N-D_s-1-b)+(s+2b)
 =N-1+s+b-D_s.
\end{equation}

It remains to compare $D_s$ with $s+\mu_s$. Set
$E_s=D_s-s-\mu_s$. Since $D_{s+1}=D_s+\mu_s+1$,
\[
 E_{s+1}=E_s+2\mu_s-\mu_{s+1}.
\]
Now $E_0=E_1=0$. The only possible drop is from $s=1$ to $s=2$; when
$\mu_2=1$, one has $E_2=-1$. Then $\mu_3\le1$, so $E_3\ge0$. For $s\ge3$ the
sequence $\mu_s$ changes by at most one and $2\mu_s-\mu_{s+1}\ge0$. Thus
\begin{equation}\label{eq:Ds-bound}
 D_s\ge s+\mu_s
\end{equation}
for every $s\ne2$. Since $b\le \mu_s$, \eqref{eq:incidence-count} is at most
$N-1$ in all these cases.

Consider the exceptional case $s=2$, where $\mu_2=1$ and $D_2=2$.
If $b=0$, the parameter space has dimension $N-3$ and the leading
antidiagonal gives corank at most $2$, so the incidence has dimension at
most $N-1$. If $b=1$ and the zero is central, the parameter space has
dimension $N-4$ and the corank is at most $3$, with the same conclusion.
It remains to consider the case in which the unique root of $P$ gives two
symmetric non-central zeros. The parameter stratum has dimension $N-4$,
and the leading antidiagonal alone gives only
$\operatorname{corank}M_q\le4$.

We claim that the generic corank on this stratum is at most $3$. By
Lemma~\ref{lem:Wstructure}, there exists $q_3\in W_3$ with
$\Phi_3(q_3)=1$; hence none of the entries on its antidiagonal vanishes.
For a form whose $W_2$-component lies in the stratum above, whose
$W_3$-component is $q_3$, and whose higher components vanish, the two
successive antidiagonals form a bidiagonal block: the leading one has
precisely the prescribed pair of zeros, while the next one has none. This
block has a non-zero minor of size $n-3$. Therefore some
$(n-3)\times(n-3)$ minor $\Delta$ of $M_q$ is not identically zero on the
parameter stratum.

On the open subset $\Delta\ne0$, the corank is at most $3$, and the
corresponding part of $\mathcal I$ has dimension at most
\[
 (N-4)+3=N-1.
\]
On the closed subset $\Delta=0$, the parameter dimension is at most
$N-5$; the original bound $\operatorname{corank}M_q\le4$ then gives again
\[
 (N-5)+4=N-1.
\]
Thus
$\dim\mathcal I\le N-1$.
\end{proof}

\section{The tangent cone}\label{sec:normal}

Let $J=\langle W\rangle$ in $\C[a_3,\ldots,a_{2p+1}]$ and set
\[
 C=\Spec\bigl(\C[a_3,\ldots,a_{2p+1}]/J\bigr).
\]
We refer to $C$ simply as the cone. The two variables $a_1,a_2$ do not
occur in $W$. The tangent cone of $X_{p,r}$ at $o$ is
\[
 \TC_o(X_{p,r})
 :=\Spec\!\left(
 \gr_{\mathfrak m}\mathcal O_{X_{p,r},o}
 \right),
\]
where $\mathfrak m$ is the maximal ideal of $o$. Once the initial forms
are known to be a regular sequence, we shall obtain
$\TC_o(X_{p,r})\simeq\Aff^2\times C$. Thus all that remains locally is
to understand the cone $C$. By Lemma~\ref{lem:incidence}, the locus where
the gradients of the forms in $W$ become dependent has dimension at most
$N-1$.

\begin{proposition}\label{prop:normal}
Assume
\[
 p\ge N=\binom{r-2p-1}{2}.
\]
Then $J$ is a complete intersection of height $N$. If $p>N$, then
\[
 \C[a_3,\ldots,a_{2p+1}]/J
\]
is a normal domain.
\end{proposition}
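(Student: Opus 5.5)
We work in the polynomial ring $R=\C[a_3,\ldots,a_{2p+1}]$, which has $n=2p-1$ variables, with the ideal $J$ generated by the $N$ homogeneous quadrics $q_1,\ldots,q_N$ spanning $W$ (Lemma~\ref{lem:Wstructure}). The strategy is to derive everything from the Jacobian bound $\dim\Sigma\le N-1$ of Lemma~\ref{lem:incidence}.

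\textbf{Step 1: $J$ is a complete intersection.} Since $J$ is homogeneous, it suffices to show that $\dim C=n-N$. Then $J$, generated by $N$ elements in a graded Cohen--Macaulay ring, has height $N$, and $q_1,\ldots,q_N$ form a regular sequence. Let $Z$ be an irreducible component of $C$. By Krull's principal ideal theorem, $\dim Z\ge n-N$, and $n-N\ge p-1\ge 0$ because $p\ge N$. Suppose first that $Z\not\subset\Sigma$. Then at a general point of $Z$ the gradients $\nabla q_1,\ldots,\nabla q_N$ are independent. By the Jacobian criterion, $C$ is smooth of dimension $n-N$ there, so $\dim Z=n-N$. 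Suppose instead that $Z\subset\Sigma$. Then $\dim Z\le N-1$. We need this to be smaller than $n-N$, i.e.\ $N-1<2p-1-N$, which is equivalent to $N<p$. In the strict range $p>N$ this rules out such components. In the boundary case $p=N$ we get $N-1=n-N$, so a component $Z\subset\Sigma$ would still have the expected dimension $n-N$. In either case every component has dimension exactly $n-N$. Hence $C$ is equidimensional of the expected dimension, and $J$ is a complete intersection of height $N$.

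\textbf{Step 2: normality for $p>N$.} Now $R/J$ is a complete intersection, hence Cohen--Macaulay, so Serre's condition $S_2$ holds. For $R_1$, note that the singular locus of $C$ is contained in $C\cap\Sigma$, since away from $\Sigma$ the generators have independent differentials and $J$ is the full ideal of a complete intersection. This locus has dimension at most $N-1$. Its codimension in $C$ is therefore at least $(n-N)-(N-1)=2(p-N)\ge2$. So $R_1$ holds, and by Serre's criterion $R/J$ is normal. Step 1 also shows that no component lies inside $\Sigma$, which gives generic reducedness; this is in any case contained in $R_0$ together with $S_1$.

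\textbf{Step 3: domain.} A normal Noetherian ring is a finite product of normal domains. The ring $R/J$ is graded with degree-zero part $\C$, and a nontrivial idempotent would have to be homogeneous of degree $0$, which is impossible. Equivalently, $C$ is a cone and is therefore connected through the vertex. Hence $R/J$ is a domain.

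\textbf{Main obstacle.} The delicate point is the boundary case $p=N$ in Step 1. There one must ensure that components contained in $\Sigma$ cannot have dimension larger than $n-N$. This is handled by the sharp bound $\dim\Sigma\le N-1=n-N$, which is exactly why Lemma~\ref{lem:incidence} needed the careful treatment of the exceptional case $s=2$. Everything else is standard commutative algebra built on that lemma.
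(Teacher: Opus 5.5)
Your proposal is correct and follows the paper's proof essentially step for step: the bound $\dim\Sigma\le N-1$, combined with Krull's theorem and the Jacobian criterion, gives height $N$; the same bound shows the singular locus has codimension at least $2(p-N)\ge 2$, so Serre's criterion gives normality; and connectedness of the cone then yields a domain. The differences are cosmetic. In the height argument you split on whether a component lies in $\Sigma$, whereas the paper argues by contradiction using tangent spaces at smooth points of a too-large component. In the domain step you rule out nontrivial idempotents in the connected graded ring, whereas the paper uses the local ring at the vertex.
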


\begin{proof}
Recall that $n=2p-1$. Suppose a component of the support of $C$ had
codimension $c<N$. Its dimension would be at least
\[
 n-N+1=2p-N\ge N.
\]
At every smooth point of that component, the gradients of the generators
of $J$ have rank at most $c$, so the component would lie in $\Sigma$.
This contradicts $\dim\Sigma\le N-1$. Thus every minimal prime over $J$
has height at least $N$. Since $J$ has $N$ generators, Krull's height
theorem gives $\operatorname{ht}J=N$. Hence the generators form a regular
sequence.

Assume now $p>N$. Then
\[
 \dim C=n-N=2p-1-N.
\]
Outside $\Sigma$, the Jacobian criterion gives regularity. Hence
\[
 \operatorname{codim}_C \operatorname{Sing}(C)
 \ge(n-N)-(N-1)=2(p-N)\ge2.
\]
The coordinate ring of $C$ is a complete intersection, hence satisfies
$S_2$. Serre's criterion therefore gives normality; equivalently, one may
apply \cite[Chap.~II, Prop.~8.23]{Hartshorne}.

Every irreducible component of the affine cone $C$ contains the origin. A
normal scheme cannot have two distinct irreducible components meeting at a
point, since the local ring there would not be a domain. Therefore $C$ has
one irreducible component, and its coordinate ring is a normal domain.
\end{proof}

\section{From the tangent cone to the quadratic hull}\label{sec:global}

We use the following standard filtered-ring fact. If $(A,\mathfrak m)$ is
a Noetherian local ring and $g_1,\ldots,g_s\in A$ have initial forms that
form a regular sequence in $\gr_{\mathfrak m}A$, then
\begin{equation}\label{eq:VV}
 \gr_{\mathfrak m}\bigl(A/(g_1,\ldots,g_s)\bigr)
 \simeq
 \frac{\gr_{\mathfrak m}A}
 {(\operatorname{in}g_1,\ldots,\operatorname{in}g_s)}.
\end{equation}
This follows, for example, from \cite[Theorem~2.3]{ValabregaValla}.

\begin{proposition}\label{prop:globalCI}
Assume
\[
 r\ge2p+3,
 \qquad
 p\ge N=\binom{r-2p-1}{2}.
\]
Then
\begin{equation}\label{eq:local-graded}
 \gr_{\mathfrak m}\mathcal O_{X_{p,r},o}
 \simeq
 \C[a_1,a_2]\otimes_{\C}
 \frac{\C[a_3,\ldots,a_{2p+1}]}{J}.
\end{equation}
Equivalently,
\[
 \TC_o(X_{p,r})\simeq\Aff^2\times C.
\]
Moreover, $X_{p,r}\subset\PP^r$ is a complete intersection of
\[
 M=\binom{r-2p}{2}
\]
quadrics. In particular,
\[
 \dim X_{p,r}=r-M=2p+1-N,
 \qquad
 \deg X_{p,r}=2^M.
\]
\end{proposition}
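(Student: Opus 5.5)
We work in the local ring $A=\mathcal O_{\PP^r,o}$, the localization of $\C[a_1,\ldots,a_r]$ at $\mathfrak m$, so that $\gr_{\mathfrak m}A=\C[a_1,\ldots,a_r]$. The ideal of $X_{p,r}$ at $o$ is generated by the $M$ dehomogenized equations \eqref{eq:allq}. Replacing each non-pivot $q_j^{(m)}$ by $\widehat q_j^{(m)}$ as in \eqref{eq:qhat} does not change this ideal. The generators then fall into two groups. The first group consists of the $r-2p-1$ pivots $q_s^{(p+1)}$, whose initial forms are the independent linear forms \eqref{eq:pivotmodm2}, a nonzero multiple of $a_{2p+2},\ldots,a_r$ up to triangular change. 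The second group consists of the $N$ forms $\widehat q_j^{(m)}$, whose linear parts vanish; their degree-two parts are the quadrics obtained from $W$ before setting $a_{2p+2},\ldots,a_r=0$. The plan is to show that these $M=(r-2p-1)+N$ initial forms are a regular sequence in $\C[a_1,\ldots,a_r]$, and then to apply \eqref{eq:VV}.

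\textbf{Regularity.} The pivot initial forms are a regular sequence of linear forms, and the quotient by them is $\C[a_1,\ldots,a_{2p+1}]$. In this quotient the images of the quadratic initial forms are exactly the forms spanning $W$. These involve only $a_3,\ldots,a_{2p+1}$, and by Proposition~\ref{prop:normal} they are a regular sequence in $\C[a_3,\ldots,a_{2p+1}]$. Adjoining the polynomial variables $a_1,a_2$ is flat, so they stay regular in $\C[a_1,\ldots,a_{2p+1}]$. Concatenation then gives a regular sequence of length $M$ in $\gr_{\mathfrak m}A$. There is one subtlety: the initial form of $\widehat q$ is its degree-two part, which also involves $a_{2p+2},\ldots,a_r$, but this is harmless because those variables are killed by the linear initial forms. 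By \eqref{eq:VV} we obtain \eqref{eq:local-graded}, and hence $\TC_o(X_{p,r})\simeq\Aff^2\times C$.

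\textbf{Globalization.} From \eqref{eq:local-graded}, $\dim\mathcal O_{X_{p,r},o}=2+(2p-1-N)=2p+1-N=r-M$. The argument now has three steps.
\begin{enumerate}
\item Every irreducible component of the support of $X_{p,r}$ is $SL_2$-stable, since $SL_2$ is connected and preserves the ideal. By the Borel fixed point theorem each component therefore contains $o$, so each has dimension at most $\dim\mathcal O_{X_{p,r},o}=r-M$.
\item By Krull, each component has dimension at least $r-M$. Hence the homogeneous ideal generated by the $M$ quadrics has codimension $M$ in the affine cone over $\PP^r$.
\item In the graded polynomial ring, an ideal generated by $M$ homogeneous elements whose height is $M$ is generated by a regular sequence. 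So $X_{p,r}$ is a complete intersection of $M$ quadrics, and B\'ezout gives $\deg X_{p,r}=2^M$.
\end{enumerate}

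The point needing most care is the reduction step: checking that passing to $\widehat q$ and then to initial forms commutes with the pivot quotient. This requires that the initial forms of the original generators, after the triangular change, are genuinely the stated linear and quadratic forms, with no cancellation producing higher-order initial terms. It also requires that \eqref{eq:VV} applies to the localized polynomial ring. Both hold because the initial-form map is compatible with the chosen linear combinations, since the scalars in \eqref{eq:qhat} are constants.
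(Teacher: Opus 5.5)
Your proposal is correct and follows essentially the same route as the paper: regularity of the initial forms via Proposition~\ref{prop:normal}, the isomorphism \eqref{eq:VV}, and then the Borel fixed point theorem, Krull's bound, and the criterion that $M$ homogeneous generators of height $M$ form a regular sequence, which gives a global complete intersection of degree $2^M$. The only cosmetic difference is that you apply \eqref{eq:VV} once in $\mathcal O_{\PP^r,o}$ to all $M$ generators (the linear pivot initial forms followed by the quadrics). The paper instead first eliminates $a_{2p+2},\ldots,a_r$ using the pivots and applies \eqref{eq:VV} to the remaining $N$ equations in $a_1,\ldots,a_{2p+1}$.
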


\begin{proof}
After the elimination in Section~\ref{sec:local}, the local equations at
$o$ consist of $N$ equations in $a_1,\ldots,a_{2p+1}$ whose quadratic
initial forms span $W$. Proposition~\ref{prop:normal} says that these forms
are a regular sequence. Formula \eqref{eq:VV} therefore gives
\eqref{eq:local-graded}, and
\[
 \dim\mathcal O_{X_{p,r},o}=2p+1-N.
\]

Every irreducible component of the support of $X_{p,r}$ is $SL_2$-stable,
because $SL_2$ is connected. The unique fixed point of the upper
triangular Borel subgroup in $\PP S^r\C^2$ is $o$. By the Borel fixed point
theorem, every component contains $o$. Hence
\[
 \dim X_{p,r}=2p+1-N.
\]
The homogeneous ideal of $X_{p,r}$ is generated by the
$M=\binom{r-2p}{2}$ quadrics in \eqref{eq:allq}, and
\[
 \operatorname{codim}_{\PP^r}X_{p,r}
 =r-(2p+1-N)
 =\binom{r-2p}{2}=M.
\]
Thus these quadrics form a regular sequence. The dimension and degree
formulas follow.
\end{proof}

We restate the main theorem before completing its proof.

\begin{maintheorem}
Let $p,r$ be integers with $p\ge0$ and $r\ge2p+3$, and set
$N=\binom{r-2p-1}{2}$. If $p\ge N$, then $X_{p,r}$ is a complete
intersection of $\binom{r-2p}{2}$ quadrics, with
\[
 \dim X_{p,r}=2p+1-N,
 \qquad
 \deg X_{p,r}=2^{\binom{r-2p}{2}}.
\]
If $p>N$, then $X_{p,r}$ is integral and
$T^p c_r\subsetneq X_{p,r}$. If $p=N$, its support is reducible except
when $(p,r)=(1,5)$; in that case $X_{1,5}=T^1c_5$ as schemes.
\end{maintheorem}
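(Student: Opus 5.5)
The complete-intersection statement, with the dimension and degree formulas, is Proposition~\ref{prop:globalCI}. It remains to prove integrality and strict containment when $p>N$, and to analyse the boundary $p=N$.

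\emph{Integrality for $p>N$.} By Proposition~\ref{prop:globalCI} and Proposition~\ref{prop:normal},
\[
\gr_{\mathfrak m}\mathcal O_{X_{p,r},o}\simeq \C[a_1,a_2]\otimes \C[a_3,\ldots,a_{2p+1}]/J
\]
is a domain. A Noetherian local ring whose associated graded ring is a domain is itself a domain, so $\mathcal O_{X_{p,r},o}$ is a domain. Hence exactly one irreducible component of the support passes through $o$, and $X_{p,r}$ is reduced along it near $o$.

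By the Borel fixed point argument already used, every irreducible component contains $o$, so the support is irreducible. Since $X_{p,r}$ is a complete intersection, it is Cohen--Macaulay and has no embedded points. Its unique associated prime is the generic point of this component. The scheme is generically reduced, because it is reduced on a neighbourhood of $o$, which meets the component. A Cohen--Macaulay scheme that is generically reduced is reduced, so $X_{p,r}$ is integral.

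\emph{Strict containment.} We have
\[
\dim X_{p,r}=2p+1-N>p+1=\dim T^pc_r
\]
exactly when $p>N$. Together with $T^pc_r\subseteq X_{p,r}$, this gives $T^pc_r\subsetneq X_{p,r}$.

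\emph{The boundary $p=N$.} Here the dimensions agree, and $T^pc_r$ is an irreducible component of $X_{p,r}$. To show reducibility, I would compare degrees. The degree of $T^pc_r$, the coincident root locus of forms $\ell^{r-p}g$, is classically
\[
\deg T^pc_r=(p+1)(r-p).
\]
This follows by counting intersections with $p+1$ general hyperplanes, or from Hilbert's formula for coincident root loci. If the support were irreducible, then $X_{p,r}$ would be a multiple structure on $T^pc_r$, and $2^M$ would equal $k\cdot (p+1)(r-p)$ for some positive integer multiplicity $k$.

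The constraint $p=N=\binom{r-2p-1}{2}$ leaves few pairs. Put $d=r-2p\ge3$, so $p=\binom{d-1}{2}$ and $M=\binom d2$.
\begin{itemize}
\item $d=3$ gives $p=1$, $r=5$, $M=3$, and $\deg T^1c_5=2\cdot4=8=2^3$.
\item $d=4$ gives $p=3$, $r=10$, $M=6$, and $(p+1)(r-p)=4\cdot7=28$, which does not divide $64$.
\item $d\ge5$: the odd factor $r-p=\binom{d-1}{2}+d$, or $p+1$ when that one is even, prevents $(p+1)(r-p)$ from dividing a power of $2$.
\end{itemize}
This is the step I expect to be the main obstacle. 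It needs a clean argument that $(p+1)(r-p)$ is never a power of $2$, and never a divisor of $2^M$, for $d\ge4$. One factor of $(p+1)(r-p)$ has an odd prime divisor for every $d\ge4$; I would try to prove this by writing
\[
p+1=\frac{d^2-3d+4}{2},\qquad r-p=\frac{d^2-d+2}{2},
\]
and checking the parities of these two expressions modulo $4$. If this fails for some $d$, I would instead argue that the multiplicity $k$ is $1$, which should follow from reducedness of $X_{p,r}$ at a general point of $T^pc_r$. Then it suffices to show $2^M\ne(p+1)(r-p)$, which is easy since $2^M$ grows much faster.

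\emph{The case $(p,r)=(1,5)$.} The degrees coincide, so it suffices to show that $X_{1,5}$ is reduced and irreducible. Irreducibility holds because the support is $SL_2$-stable, of dimension $2=\dim T^1c_5$, and each component contains $o$. At $o$, the tangent cone is $\Aff^2\times C$ with $n=1$ and $N=1$, so $C$ is a point and $X_{1,5}$ is smooth of dimension $2$ at $o$. Hence exactly one component of the support passes through $o$, and it has multiplicity one there; since every component contains $o$, the support is irreducible. The scheme is Cohen--Macaulay, hence has no embedded points, and it is generically reduced because it is smooth at $o$. A generically reduced Cohen--Macaulay scheme is reduced, so $X_{1,5}$ is integral of degree $8$, containing the integral surface $T^1c_5$ of the same dimension. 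Therefore $X_{1,5}=T^1c_5$ as schemes.
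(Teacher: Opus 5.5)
Your handling of $p>N$ is the paper's argument: a domain at $o$ via the associated graded ring, the Borel fixed point, Cohen--Macaulay plus generically reduced, and strict containment by dimension. The boundary case $p=N$, however, has two genuine gaps.

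\textbf{The arithmetic step.} You need $(p+1)(r-p)$ not to be a power of $2$ when $d=r-2p\ge4$. Your proposed reason, that one factor is odd, is false. Since $p+1=\binom{d-1}{2}+1$ and $r-p=\binom d2+1$, both factors are even whenever $d\equiv3\pmod4$. For $d=7$, i.e.\ $(p,r)=(15,37)$, one gets $p+1=16$ and $r-p=22$.

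Your fallback, proving $k=1$ from reducedness of $X_{p,r}$ at a general point of $T^pc_r$, is unsupported. At $p=N$ the incidence estimate only yields $\operatorname{codim}_C\operatorname{Sing}(C)\ge2(p-N)=0$, and nothing in your argument controls the Jacobian of the quadrics along $T^pc_r$.

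The missing observation is a size comparison, not a parity one. We have $r-p=(p+1)+(d-1)$, and $d-1\le\binom{d-1}{2}<p+1$ for $d\ge4$, so $p+1<r-p<2(p+1)$. Two distinct powers of $2$ have ratio at least $2$, so one factor has an odd prime divisor. Hence $(p+1)(r-p)\nmid2^M$, whatever $k$ is.

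\textbf{The case $(p,r)=(1,5)$.} Here $C$ is not a reduced point. With $n=N=1$, $W$ is spanned by $a_3^2$ (its coefficient is $q_2^{(2)}(x_3\cdot x_3)=6$), so $C=\Spec\C[a_3]/(a_3^2)$. The tangent cone $\Aff^2\times C$ is therefore non-reduced, and $X_{1,5}$ has embedding dimension $3$ and multiplicity $2$ at $o$. This must happen, since $o$ lies on the cuspidal edge $c_5$ of the tangent developable.

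So smoothness at $o$ fails, and with it your derivation of irreducibility and generic reducedness. A tangent cone with a single minimal prime does not force the local ring to have one; think of $y^2=x^4$. Nor does ``every component is $SL_2$-stable and contains $o$'' give irreducibility.

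The paper obtains irreducibility from outside the local analysis. Since $r=5>4p$ lies in the stable range, \cite[Corollary~2.6]{Massri2026} identifies the support with $T^1c_5$. Then $8=\deg X_{1,5}=k\cdot\deg T^1c_5=8k$ forces $k=1$, and Cohen--Macaulayness gives reducedness.
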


\begin{proof}
Proposition~\ref{prop:globalCI} gives the complete-intersection statement
and the formulas for dimension and degree. Assume first that $p>N$.
By Proposition~\ref{prop:normal}, the right-hand side of
\eqref{eq:local-graded} is a domain. Since the maximal-ideal filtration is
separated, $\mathcal O_{X_{p,r},o}$ is a domain.

Every irreducible component of the support contains $o$, so the support has
only one component. A complete intersection is Cohen--Macaulay and has no
embedded associated primes. The local ring at $o$ is reduced, hence the
unique component is generically reduced. Therefore $X_{p,r}$ is integral.
Moreover,
\[
 \dim X_{p,r}-(p+1)=p-N>0,
\]
so the inclusion $T^p c_r\subseteq X_{p,r}$ is strict.

Now assume $p=N$ and $r\ge2p+4$. Then $X_{p,r}$ has dimension $p+1$ and
degree $2^M$, where $M=\binom{r-2p}{2}$. The osculating variety is
irreducible and, by the Hilbert polynomial computed in
\cite{Weyman1989},
\[
 \dim T^p c_r=p+1,
 \qquad
 \deg T^p c_r=(p+1)(r-p).
\]
Set $t=r-2p-1$. The boundary equality gives $p=\binom t2$ and $t\ge3$,
while $r-p=p+t+1$. Hence
\[
 p+1<r-p<2(p+1).
\]
The two factors $p+1$ and $r-p$ cannot both be powers of two, since two
distinct powers of two have ratio at least two. Thus
$\deg T^p c_r$ has an odd prime divisor and cannot divide $2^M$.

If the support of $X_{p,r}$ were irreducible, then, because it contains the
irreducible variety $T^p c_r$ of the same dimension, the two supports would
coincide. The degree of the complete intersection would then be a positive
integer multiple of $\deg T^p c_r$, a contradiction. Hence the support is
reducible. This degree argument is the same one used for the case
$(p,r)=(3,10)$ in \cite[Introduction]{Massri2026}.

It remains to consider $(p,r)=(1,5)$. This lies in the stable range of
\cite[Corollary~2.6]{Massri2026}, so the support of $X_{1,5}$ is $T^1c_5$.
Proposition~\ref{prop:globalCI} gives $\deg X_{1,5}=8$, while
\cite{Weyman1989} gives $\deg T^1c_5=8$. Hence the unique component occurs
with multiplicity one. Since a complete intersection is Cohen--Macaulay
and has no embedded associated primes, $X_{1,5}$ is reduced. Therefore
$X_{1,5}=T^1c_5$ as schemes.
\end{proof}

We finish with a consequence for the individual irreducible summands of the
quadratic equations. Recall the varieties considered in \cite{Massri2013}:
for $0\le m\le\lfloor r/2\rfloor$, let
\[
 M_m=\{[f]\in\PP S^r(\C^2):\pi_m(f\cdot f)=0\}.
\]
Thus $M_m$ is defined by the $2r-4m+1$ quadrics corresponding to the
summand $S^{2r-4m}\C^2$.

\begin{corollary}\label{cor:individual-Mm}
Let $r\ge2m+1$. If
\[
 m-1\ge\binom{r-2m+1}{2},
\]
then $M_m$ is a complete intersection of $2r-4m+1$ quadrics. In particular,
\[
 \dim M_m=4m-r-1,
 \qquad
 \deg M_m=2^{\,2r-4m+1}.
\]
If the inequality is strict, then $M_m$ is integral.
\end{corollary}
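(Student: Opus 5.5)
The plan is to reduce the corollary to Proposition~\ref{prop:globalCI} and the theorem, using the nesting of the quadratic hulls. The $2r-4m+1$ quadrics defining $M_m$ are exactly the quadrics $q_j^{(m)}$, $0\le j\le 2r-4m$. We set $p=m-1$. The equations \eqref{eq:allq} for $X_{p,r}$ are then the $q_j^{(m')}$ with $m\le m'\le\lfloor r/2\rfloor$, so $M_m$ contains $X_{m-1,r}$ but in general has fewer equations. So the theorem itself will not apply directly. Instead we rerun the local argument of Sections~\ref{sec:local}--\ref{sec:normal} with only the single summand $m$ kept.

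First, we work at $o=[x^r]$ with $a_0=1$. By \eqref{eq:qmodm2}, the equations $q_j^{(m)}$ with $2m+j\le r$ have independent linear parts $a_{2m},\ldots,a_r$. This gives $r-2m+1$ pivots; they are exactly the pivots \eqref{eq:pivotmodm2} with $p=m-1$. The remaining equations are those with $2m+j>r$, i.e.\ $r-2m<j\le 2r-4m$, and there are $r-2m=:N'$ of them. Their quadratic initial forms, after setting $a_{2m}=\cdots=a_r=0$, live in $\C[a_1,\ldots,a_{2m-1}]$, and in fact in $\C[a_3,\ldots,a_{2p+1}]$ as before.

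The key observation is that these forms are precisely the components $W_s$ with $L_s>r$ coming from $m'=m$ only. Their span $W'$ therefore splits as a direct sum of subspaces $W'_s\subset W_s$, one per antidiagonal $L_s=2m+j$ with $r<L_s\le 2r-2m$. Each $W'_s$ is one-dimensional and spanned by a single reflected Hahn polynomial. The substitute for Lemma~\ref{lem:Wstructure} is that $\Phi_s(W'_s)$ is the line spanned by a polynomial of degree $\le\mu_s$ whose zeros are distinct points. Lemma~\ref{lem:Wstructure} still controls vanishing of coefficients: a nonzero form in $W'$ has, on its first nonzero antidiagonal, at most $\mu_s$ roots among the active values, and so at most $2\mu_s$ zero entries.

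Second, we redo the incidence estimate of Lemma~\ref{lem:incidence} for $W'$, with $N'=r-2m$ in place of $N$ and $n=2m-3$. Now $\PP W'$ is stratified by the first nonzero antidiagonal $s$, and the stratum has dimension $N'-D'_s-1$ with $D'_s$ the number of earlier lines. The triangular-minor argument gives $\dim\ker M_q\le s_{\mathrm{rel}}+2b$. Here $b$ is the number of roots, which are not free parameters but fixed by the single generator, and $s_{\mathrm{rel}}$ is the offset of the block. This is the main obstacle. Because each $W'_s$ is a line, we cannot trade roots against parameters as in Lemma~\ref{lem:incidence}, so the count must come from the hypothesis $m-1\ge\binom{r-2m+1}{2}$. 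This is a stronger inequality than $p\ge\binom{r-2p-1}{2}$, and it is exactly designed so that $n-N'+1$ dominates the worst-case corank $\binom{r-2m+1}{2}$.

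My first attempt would be a crude bound. The corank of any nonzero $q\in W'$ is at most $n$ minus the size of the nonvanishing part of its leading antidiagonal. We would then show $\dim\Sigma'\le\binom{r-2m+1}{2}$ and $\dim C'\ge 2m-3-N'$. Then we compare, as in the proof of Proposition~\ref{prop:normal}: a component of codimension $c<N'$ would have dimension $\ge n-N'+1$, which by hypothesis exceeds $\dim\Sigma'$. This gives the complete intersection of $C'$, and the codimension-two singular locus in the strict case.

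Third, we globalize exactly as in Proposition~\ref{prop:globalCI} and the proof of the theorem. By \eqref{eq:VV}, the tangent cone at $o$ is $\Aff^2\times C'$, so $\dim\mathcal O_{M_m,o}=2m-1-N'=4m-r-1$. Every component of $M_m$ is $SL_2$-stable and hence passes through $o$ by the Borel fixed point theorem. The codimension is then $r-(4m-r-1)=2r-4m+1$, which equals the number of defining quadrics. This gives the complete intersection, $\dim M_m=4m-r-1$ and $\deg M_m=2^{2r-4m+1}$. In the strict case, normality of $C'$ makes $\gr\mathcal O_{M_m,o}$ a domain. Hence $\mathcal O_{M_m,o}$ is a domain, the support is irreducible, and Cohen--Macaulayness gives integrality.
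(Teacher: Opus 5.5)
There is a genuine gap, and it starts from a misreading of the hypothesis. With $p=m-1$ one has $r-2p-1=r-2m+1$ and $2p+3=2m+1$. So the condition $m-1\ge\binom{r-2m+1}{2}$ is \emph{identical} to $p\ge\binom{r-2p-1}{2}$, not stronger, and Proposition~\ref{prop:globalCI} applies verbatim. It says that the $\binom{r-2m+2}{2}$ quadrics $q_j^{(m')}$, $m'\ge m$, form a homogeneous regular sequence in $\C[a_0,\ldots,a_r]$. Homogeneous regular sequences of positive degree can be permuted, so the subfamily $q_j^{(m)}$, $0\le j\le 2r-4m$, is regular as well. That is the paper's entire proof of the complete-intersection part; the dimension and degree follow at once.

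For integrality the paper also does no new local analysis. Take $A=\mathcal O_{M_m,o}$ and let $I$ be generated by the remaining equations of $X_{m-1,r}$, which form an $A$-regular sequence. Then $A/I=\mathcal O_{X_{m-1,r},o}$ is a domain by the theorem, so $\gr_I A\simeq (A/I)[T_1,\ldots,T_k]$ is a domain. Since the $I$-adic filtration is separated, $A$ is a domain. The Borel fixed point argument and Cohen--Macaulayness then finish.

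Your replacement route never closes. The incidence estimate for $W'$, which you yourself call the main obstacle, is only announced: the bound $\dim\Sigma'\le\binom{r-2m+1}{2}$ is not derived. Even granted, it does not give what you need. Write $t=r-2m$ and take $t=1$. At $m=2$ it fails to give $n-N'+1>\dim\Sigma'$. At $m=3$, in the strict case, it gives only $\operatorname{codim}_{C'}\operatorname{Sing}C'\ge1$. So as written, neither the complete-intersection property of $C'$ nor its normality is proved.

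If you want to keep the local approach, the missing observation is that the generators of $W'$ are among the $\widehat q_j^{(m')}$ spanning $W$ for $p=m-1$. They are therefore a regular sequence by Proposition~\ref{prop:normal}. Moreover $W'\subset W$ forces $\Sigma'\subset\Sigma$, so $\dim\Sigma'\le N-1$ by Lemma~\ref{lem:incidence}. In the strict case this yields $\operatorname{codim}_{C'}\operatorname{Sing}C'\ge(2m-3-t)-(N-1)\ge\binom t2+2$, and your globalization step then goes through. At that point, though, you are only reusing the paper's estimates on a subfamily, and the direct subsequence argument is much shorter.
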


\begin{proof}
Take $p=m-1$ in Proposition~\ref{prop:globalCI}. Under the hypothesis of the
corollary, the quadrics defining $X_{m-1,r}$ form a regular sequence. The
equations of $M_m$ are a subfamily of this sequence, hence form a regular
sequence as well. This proves the complete-intersection statement and the
formulas for dimension and degree.

Assume that the inequality is strict and set
\[
 A=\mathcal O_{M_m,o}.
\]
Let $I\subset A$ be generated by the remaining equations of $X_{m-1,r}$.
They form an $A$-regular sequence of length $\binom{r-2m}{2}$, and
\[
 A/I=\mathcal O_{X_{m-1,r},o}.
\]
The latter is a domain by Theorem~\ref{thm:overview}; hence
\[
 \operatorname{gr}_I A
 \simeq
 \mathcal O_{X_{m-1,r},o}
 [T_1,\ldots,T_{\binom{r-2m}{2}}]
\]
is a domain. Since the $I$-adic filtration is separated, $A$ is a domain.

Every irreducible component of $M_m$ is $SL_2$-stable and therefore contains
$o$ by the Borel fixed point theorem. Thus $M_m$ has only one component.
Since it is a complete intersection, it is Cohen--Macaulay and has no embedded
associated primes; being reduced at $o$, its unique component is generically
reduced. Hence $M_m$ is integral.
\end{proof}

When $r=2m$, the variety $M_m$ is the nondegenerate invariant quadric
described in \cite[Example~11]{Massri2013}, and is therefore integral as
well.

\end{document}